\numberwithin{equation}{section}
\newtheorem{thm}{Theorem}[section]
\newtheorem{lemma}{Lemma}[section]
\newtheorem{prop}{Proposition}[section]
\newtheorem{cor}{Corollary}[section]
{\theorembodyfont{\rmfamily}

\newtheorem{examp}[thm]{Example}

\newtheorem{rmk}[thm]{Remark}
}
\newcommand{\qed}{\hfill \mbox{\raggedright \rule{.07in}{.1in}}}
\newenvironment{proof}{\vspace{1ex}\noindent{\bf Proof}\hspace{0.5em}}{\hfill\qed\vspace{1ex}}
\newenvironment{pfof}[1]{\vspace{1ex}\noindent{\bf Proof of #1}\hspace{0.5em}}{\hfill\qed\vspace{1ex}}
\newcommand{\R}{{\mathbb R}}
\newcommand{\Z}{{\mathbb Z}}
\newcommand{\cW}{\mathcal{W}}
\newcommand{\BBW}{{\mathbb W}}
\newcommand{\SMALL}{\textstyle}
\title{A note on statistical properties for \\ nonuniformly hyperbolic systems with \\ slow contraction and expansion}
 \author{I. Melbourne~\thanks{Mathematics Institute, University of Warwick, Coventry, CV4 7AL, UK. 
 \newline  
i.melbourne@warwick.ac.uk}  \and P. Varandas~\thanks{Departamento de Matem\'atica, Universidade Federal da Bahia, 40170-110 Salvador, Brazil, and CMUP, University of Porto, Portugal.  paulo.varandas@ufba.br} }
\date{27 November 2014. Updated 22 June 2015}
\begin{document}

\maketitle

 \begin{abstract}
We provide a systematic approach for deducing statistical limit laws via martingale-coboundary decomposition, for nonuniformly hyperbolic systems with slowly contracting and expanding directions.
In particular, if the associated return time function is square-integrable, then we obtain the central limit theorem, the weak invariance principle, and an iterated version of the weak invariance principle.
\end{abstract}

 \section{Introduction}  \label{sec-intro}

We consider dynamical systems $f:M\to M$ that are nonuniformly hyperbolic in the sense of Young~\cite{Young98,Young99}. Roughly speaking, this means that there is a uniformly hyperbolic induced
map $F:Y\to Y$ where $Y\subset M$. Here $F=T^r$ where $r:Y\to\Z^+$, and the probability that the return time $r$ exceeds $n$ decays exponentially~\cite{Young98} or at least at some polynomial rate~\cite{Young99}.  (It is not assumed that $F$ is the first return map to $Y$.)

This set up includes the Axiom~A case for which it is classical since~\cite{Bowen75, Ruelle78, Sinai72} that H\"older observables satisfy properties such as exponential decay of correlations and strong statistical limit laws. Many of these properties go over to systems that are modelled by Young towers with exponential tails~\cite{Young98}.  The latter framework is flexible enough to include many important nonuniformly hyperbolic systems such as dispersing billiards~\cite{Young98,Chernov99} and H\'enon-like attractors~\cite{BenedicksYoung00}. The results are first proved in the noninvertible setting (one-sided subshifts with finite or countably infinite alphabet) before passing to the underlying invertible system $f:M\to M$.

The case of polynomial tails is more complicated.   Indeed the original paper of Young~\cite{Young99} is concerned entirely with the noninvertible situation, so there is nonuniform expansion and no contracting directions.  In this situation, Young proved polynomial decay of correlations, and  again various statistical limit laws hold if the polynomial decay rate is strong enough.  For example, the central limit theorem (CLT) and associated invariance principles hold if the decay rate is summable~\cite{Gouezel07,MN05,MN09,Young99}.  

Turning to the nonuniform hyperbolic situation with polynomial tails, if it is assumed in addition that there is exponential contraction along stable manifolds~\cite{AlvesPinheiro08,MN05}, or sufficiently rapid polynomial contraction~\cite{AlvesAzevedosub}, then it is  straightforward to pass from the noninvertible  nonuniformly 
expanding case to the underlying dynamical system on $M$.

However, there is so far no systematic treatment of the general case where $F:Y\to Y$ is uniformly hyperbolic but no contraction or expansion is assumed except on returns to $Y$.  This is the case for many important examples including billiards.   
In various situations, the reliance on extra conditions regarding contraction along stable manifolds has been overlooked leading to unproved claims in the literature.

Recently there have been some attempts to remedy this situation.  We note the following results.

\vspace{1ex}
\noindent(a) By B\'alint \& Gou\"ezel~\cite{BalintGouezel06}, the CLT for nonuniformly expanding maps passes over to the nonuniformly hyperbolic setting.

\vspace{1ex}
\noindent(b) By Chazottes \& Gou\"ezel~\cite{ChazottesGouezel12}, moment estimates and concentration inequalities for nonuniformly expanding maps pass over to the nonuniformly hyperbolic setting.

\vspace{1ex}
\noindent(c) The moment estimates in~(b) imply polynomial large deviation estimates by~\cite{MN08,M09b}.

\vspace{1ex}
\noindent(d) The method in~\cite{ChazottesGouezel12} can be adapted~\cite{GouezelPC} to yield polynomial decay of correlations for nonuniformly hyperbolic systems.   This idea was used in~\cite{MT14} for more general decay rates.

Our purpose in this paper is to give a simple but general method for passing from noninvertible to invertible systems deducing a variety of statistical limit laws in one shot.
In particular, we obtain the central limit theorem, the weak invariance principle, and an iterated version of the weak invariance principle.
The latter is crucial for understanding systems with multiple timescales and their convergence to stochastic differential equations~\cite{KM16,KMsub}.  (See in particular~\cite[Section~10.2]{KM16} which makes use of Corollary~\ref{cor-iterated} below.)
It is possible that existing methods could be adapted to cover the results presented here, but it seems useful to have a general result of this type.

 A successful strategy for deriving various statistical limit laws has been the following:
\begin{itemize}

\parskip=-2pt
 \item[(i)]   {\em Quotient} along stable manifolds to reduce to  a nonuniformly expanding (noninvertible) map
	$\bar f:\bar M\to\bar M$.
 \item[(ii)]  {\em Induce}  to reduce to a uniformly expanding map $\bar F:\bar Y\to\bar Y$.
 \item[(iii)]  Obtain a {\em martingale-coboundary decomposition} for observables on $\bar Y$ following 
 	Gordin~\cite{Gordin69}.
 \item[(iv)] Apply probabilistic results for martingales.
  \end{itemize}

 The main problem when the contraction along stable manifolds is subexponential is that in general the quotienting step cannot be done in isolation.  Our revised strategy is to induce first and then perform the quotienting and decomposition steps simultaneously (bypassing the nonuniformly expanding map $\bar f$ altogether).  That is, we carry out the steps in the order: first induce (ii) to a uniformly hyperbolic map $F:Y\to Y$; then perform the quotienting and Gordin steps (i) and (iii) together; and finally apply the martingale results (iv).

The remainder of this paper is organised as follows.
In Section~\ref{sec-main}, we state our main result Theorem~\ref{thm-main} which covers the revised steps (i) and (iii) above.  We also show how various statistical limit laws follow from this result.
In Section~\ref{sec-proof}, we present the proof of Theorem~\ref{thm-main}.
In Section~\ref{sec-example}, we give illustrative examples.

\section{Main result}
\label{sec-main}

Let $f: M \to M$ be a diffeomorphism (possibly with singularities) defined on a Riemannian manifold $(M,d_M)$.  We assume that $f$ is nonuniformly hyperbolic in the sense of Young~\cite{Young98,Young99}.  The precise definitions are somewhat technical; here we are content to focus on the parts necessary for understanding this paper, referring to~\cite{Young98,Young99} for further details.

As part of this set up, there is a subset $Y\subset M$, a countable partition 
$\{Y_j\}$ of $Y$  and an inducing time
$r: Y \to \Z^+$ constant on partition elements such that $f^{r(y)}(y)\in Y$
for all $y\in Y$.  We refer to $F=f^r:Y\to Y$ as the induced map.
(It is not required that $r$ is the first return time to $Y$.)
The separation time $s(y,y')$ of points $y,y'\in Y$ is the least integer $n\ge0$ such that $F^ny,F^ny'$ lie in distinct partition elements of $Y$.

Let $\cW^s$ denote a measurable partition of $Y$ (consisting of ``stable leaves'') such that each partition element $Y_j$ is a union of stable leaves.
In particular, $r$ is constant on stable leaves.  If $y\in Y$, the leaf in $\cW^s$ that contains $y$ is labelled $W^s(y)$.  
Let $W^u$ denote a measurable subset of $Y$ 
such that $W^u$ intersects each element of $\cW^s$ in a single point.
We refer to elements of $\cW^s$ as ``stable leaves'' and to $W^u$ as an ``unstable leaf''.

We assume
\begin{itemize}
\item[(A1)] $F(W^s(y))\subset W^s(Fy)$ for all $y\in Y$.
\item[(A2)] There exist constants $C\ge1$, $\gamma_0\in(0,1)$, such that
\begin{itemize}
\item[(i)]
 $d_M(f^\ell F^jy,f^\ell F^jy')\le C\gamma_0^j$ for all $y'\in W^s(y)$, $y\in Y$,
\item[(ii)] 
$d_M(f^\ell F^jy,f^\ell F^jy')\le C\gamma_0^{s(y,y')-j}$ for all $y,y'\in W^u$, 
\end{itemize}
for all $j\ge0$, $0\le\ell\le r(F^jy)$, 
\end{itemize}

Let $\bar Y=Y/\sim$ where $y\sim y'$ if $y'\in W^s(y)$, and let $\pi:Y\to\bar Y$
denote the natural projection.  By (A1), we obtain well-defined functions
$r:\bar Y\to\Z^+$ and $\bar F:\bar Y\to\bar Y$.  In Section~\ref{sec-GM},
we recall the definition for $\bar F$ to be Gibbs-Markov.
We assume
\begin{itemize}
\item[(A3)] 
$\bar F:\bar Y\to\bar Y$ is a mixing Gibbs-Markov map with ergodic invariant probability measure $\bar\mu_Y$ and measurable
countable partition $\alpha$ consisting of the partition elements $Y_j$ quotiented by $\cW^s$.
Moreover, $\bar\mu_Y=\pi_*\mu_Y$ where $\mu_Y$ is an $F$-invariant ergodic probability measure on $Y$.
\item[(A4)] $r\in L^1(\bar Y)$ (equivalently $r\in L^1(Y)$).
\end{itemize}

\begin{rmk}  \label{rmk-A2}
Properties (A2) and (A3) imply that the induced map $F$ has exponential contraction along the
stable leaves and exponential expansion along the unstable directions, while no further assumption is made on contraction and expansion for~$f$.
\end{rmk}

\begin{rmk}
There is a standard procedure to pass from the $F$-invariant ergodic probability
measure $\mu_Y$ on $Y$ to
an $f$-invariant ergodic probability
measure $\mu_M$ on $M$, which we now briefly recall.   
Define $\Delta=\{(y,\ell)\in Y\times\Z:0\le\ell<r(y)\}$ with probability measure
$\mu_\Delta=\mu_Y\times\{{\rm counting}\}/\int r\,d\mu_Y$.  Define $\pi_\Delta:\Delta\to M$, $\pi_\Delta(y,\ell)=f^\ell y$.  Then $\mu_M=(\pi_\Delta)_*\mu_\Delta$
is the desired probability measure on $M$.

We omit the additional assumptions in Young~\cite{Young98} that guarantee that $\mu_M$ is a physical measure for $f$.  The results here do not rely on this property.
\end{rmk}

Let $v:M\to\R^d$ be a H\"older observable.  We define the induced observable $V:Y\to\R^d$ by setting
$V(y)=\sum_{\ell=0}^{r(y)-1}v(f^\ell y)$.  We suppose throughout that $\int v\,d\mu_M=0$, and hence $\int V\, d\mu_Y=0$.
Let $L$ denote the transfer operator 
corresponding to $\bar F$, defined by $\int L\phi\,\psi\,d\bar\mu_Y=\int \phi\,\psi\circ \bar F\,d\bar\mu_Y$ for $\phi\in L^1(\bar Y)$, $\psi\in L^\infty(\bar Y)$.

We can now state our main result.
\begin{thm} \label{thm-main}
Suppose that $r\in L^p(Y)$, where $p\ge1$.  Then there exists 
$\bar m\in L^p(\bar Y)$ and $\chi\in L^p(Y)$ such that
\begin{itemize}
\item[(a)] $V=\bar m\circ\pi+\chi\circ F-\chi$,
\item[(b)] $\bar m\in \ker L$.
\end{itemize}
\end{thm}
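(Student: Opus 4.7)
The plan is to execute the revised strategy outlined in the Introduction: having already induced to $F:Y\to Y$, we combine the stable-direction quotient (step (i)) and the Gordin construction (step (iii)) into a single operation on $F$.

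Let $h:Y\to W^u$ denote the holonomy $h(y)=\hat y$ with $\{\hat y\}=W^u\cap W^s(y)$, and put $V^u(y):=V(\hat y)$; this is constant on stable leaves and descends to $\bar V^u\in L^p(\bar Y)$. To write $V-V^u$ as an $L^p$ coboundary up to a $\pi$-measurable remainder, I would define
\[
\chi_1(y):=\sum_{k=0}^\infty\bigl[V(F^k\hat y)-V(F^k y)\bigr].
\]
By (A1) we have $F^k\hat y\in W^s(F^k y)$, so (A2)(i) applied to $(y,\hat y)$ together with the H\"older regularity of $v$ (exponent $\eta$) gives the pointwise bound $|V(F^k\hat y)-V(F^k y)|\le C\,r(F^k y)\,\gamma_0^{k\eta}$; combined with $F$-invariance of $\mu_Y$ and $r\in L^p(Y)$ this yields $\chi_1\in L^p(Y)$. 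A short telescoping computation, using $\widehat{F\hat y}=\widehat{Fy}$ (both points lie in $W^u\cap W^s(Fy)$), produces
\[
\chi_1\circ F-\chi_1=V-V^u+R,\qquad R(y):=\sum_{k=0}^\infty\bigl[V(F^k\widehat{Fy})-V(F^{k+1}\hat y)\bigr].
\]
Since $\widehat{Fy}$ and $F\hat y$ both lie in $W^s(Fy)$, (A2)(i) bounds the $k$th summand of $R$ by $C\,r(F^{k+1}y)\,\gamma_0^{k\eta}$, so $R\in L^p(Y)$; moreover $R$ depends on $y$ only through $\pi y$, so $R=\bar R\circ\pi$. Setting $\bar W:=\bar V^u-\bar R\in L^p(\bar Y)$, with $\int\bar W\,d\bar\mu_Y=\int V\,d\mu_Y=0$, we obtain
\[
V=\bar W\circ\pi+\chi_1\circ F-\chi_1.
\]

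The second step is to apply Gordin's martingale-coboundary decomposition to the mean-zero $\bar W$ on the mixing Gibbs-Markov map $\bar F$: take $\bar\chi_2:=\sum_{n\ge1}L^n\bar W$ and $\bar m:=\bar W-\bar\chi_2\circ\bar F+\bar\chi_2$; the identity $L(\bar\chi_2\circ\bar F)=\bar\chi_2$ together with the telescoping $L\bar\chi_2=\bar\chi_2-L\bar W$ gives $L\bar m=0$. Combining via $\pi\circ F=\bar F\circ\pi$ produces
\[
V=\bar m\circ\pi+\chi\circ F-\chi,\qquad \chi:=\chi_1+\bar\chi_2\circ\pi\in L^p(Y),
\]
which is the required decomposition.

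The main obstacle is establishing $L^p$-convergence of $\bar\chi_2=\sum_{n\ge1}L^n\bar W$. Because $\bar W$ is only pointwise bounded by a constant multiple of the return time $r$ rather than being bounded, the classical bounded-H\"older Gibbs-Markov Gordin theory does not apply off the shelf; one must work in a weighted space of piecewise-H\"older observables with weight $r$. Membership of $\bar V^u$ in such a space is immediate from (A2)(ii), but the corresponding regularity of $\bar R$ requires a careful term-by-term analysis of its defining series, splitting at the index $k=s(\bar y,\bar y')$: the head uses (A2)(ii) applied to the pairs $(\widehat{Fy},\widehat{Fy'})$ and $(y,y')$ to obtain H\"older-type decay, while the tail relies on the individual pointwise bound via the factor $\gamma_0^{k\eta}\le\gamma_0^{s(\bar y,\bar y')\eta}$. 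The spectral gap / polynomial decay estimates for $L^n$ on this weighted space then secure $L^p$-convergence of the Gordin series.
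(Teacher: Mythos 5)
Your decomposition step is exactly the paper's: the $\chi_1$ you write down is identical to the one in Section~3.1, and unwinding $\bar W=\bar V^u-\bar R$ shows it coincides with the paper's $\widehat V=\sum_{j\ge0}A_j$ (with $A_0=V(\hat y)$ and $A_{k+1}(y)=V(F^{k+1}\hat y)-V(F^k\widehat{Fy})$, so that $R=-\sum_{j\ge1}A_j$ and $V^u=A_0$). The final assembly $\bar\chi_2=\sum_{n\ge1}L^n\bar W$, $\bar m=\bar W-\bar\chi_2\circ\bar F+\bar\chi_2$, $\chi=\chi_1+\bar\chi_2\circ\pi$ also matches. So the whole question is whether $\sum_n L^n\bar W$ converges in $L^p$, and this is where your argument has a genuine gap.

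You propose to run the Gordin series through a spectral gap for $L$ on an ``$r$-weighted piecewise-H\"older'' space, but $\bar W$ does not live in such a space. While $\bar V^u$ does (as you say, (A2)(ii) gives $|\bar V^u(y)-\bar V^u(y')|\lesssim r(y)\gamma^{s(y,y')}$), the term-by-term estimate for $\bar R$ that you sketch does not close: splitting the series at $k=s(y,y')$ gives a bound of the form
$\sum_{k<s}r(F^{k+1}y)\gamma^{s-1-k}+\sum_{k\ge s}r(F^{k+1}y)\gamma^k$,
a weighted sum over the entire future orbit $r\circ F^{k+1}$ that cannot be absorbed into a single cylinder-constant weight $r(y)$. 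So $L$ has no obvious spectral gap on a space containing $\bar W$, and your last paragraph asserts rather than proves the key decay $|L^n\bar W|_p\lesssim\tau^n$. The paper's Proposition~3.2 gets around this precisely by keeping the decomposition $\widehat V=\sum_j A_j$ and applying $L^{j+1}$ to each piece $\bar A_j$ \emph{separately}: because $\bar A_j$ is localised on $(j+1)$-cylinders with $|\bar A_j|\lesssim(r\circ F^j)\gamma^j$, the operator $L^{j+1}$ integrates out the $r\circ F^j$ factor (via the Gibbs bound $g_{j+1}\le C_1\mu(a)$), landing $L^{j+1}\bar A_j$ in the \emph{bounded} H\"older space with norm $\lesssim|r|_1\gamma^{j/2}$. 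Only then is the ordinary spectral gap invoked, and the $j<k$ / $j\ge k$ split in Lemma~3.1 completes the estimate. That iterate-matching trick ($L^{j+1}$ applied to $\bar A_j$) is the essential idea missing from your sketch; without it, or some substitute, the convergence of $\bar\chi_2$ is not established.
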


\begin{rmk}
The utility of this theorem lies in the following observations:

\noindent (i) There are standard methods for reducing from proving statistical limit laws for $v$ on $(M,\mu_M)$ to proving limit laws for the induced observable $V$ on $(Y,\mu_Y)$ (see for example~\cite{Ratner73,DenkerPhilipp84,MT04,Gouezel07,Zweimueller07,MZ15}).

\noindent (ii) The coboundary $\chi\circ F-\chi$ in~(a) has little or no effect on statistical properties, so limit laws for $V$ on $(Y,\mu_Y)$ often reduce to limit laws for $\bar m$ on $(\bar Y,\bar\mu_Y)$.

\noindent (iii) Property~(b)  implies that $\{\bar m\circ \bar F^j:\,j\ge0\}$ forms an ergodic stationary sequence of $L^p$ reverse martingale increments (see for example~\cite{Gordin69} or~\cite[Remark 3.12]{FMT03} for more details).   There are many limit laws for such sequences in the probability literature.

Thus we can prove statistical limit laws for $\bar m:(\bar Y,\bar\mu_Y)\to\R^d$ and then pass (via $V:(Y,\mu_Y)\to\R^d$) to the original
observable $v:(M,\mu_M)\to\R^d$.
\end{rmk}

Now we describe some results that follow from our main theorem. (The list is not intended to be exhaustive.)

\begin{cor}[CLT] \label{cor-CLT}
Suppose that $p\ge2$.  
Then $n^{-1/2}\sum_{j=0}^{n-1}v\circ f^j\to_d G$ where $G$ is a $d$-dimensional normal
distribution with mean zero and covariance matrix $\Sigma\in\R^{d\times d}$.
\end{cor}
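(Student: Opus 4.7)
The plan is to apply Theorem~\ref{thm-main} with $p=2$ and then follow the standard three-step path outlined in the remark following Theorem~\ref{thm-main}: deduce the CLT first for $\bar m$ on $(\bar Y,\bar\mu_Y)$, then for the induced observable $V$ on $(Y,\mu_Y)$, and finally for $v$ on $(M,\mu_M)$.

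First, since $p\ge 2$, Theorem~\ref{thm-main} yields $\bar m\in L^2(\bar Y)$ and $\chi\in L^2(Y)$ with $V=\bar m\circ\pi+\chi\circ F-\chi$ and $\bar m\in\ker L$. By (A3) together with $\bar m\in\ker L$, the sequence $\{\bar m\circ\bar F^j:j\ge 0\}$ is stationary and ergodic and consists of $L^2$ reverse martingale increments with respect to the natural decreasing filtration generated by $\bar F$ (cf.~\cite{Gordin69,FMT03}). The reverse martingale CLT therefore gives
\[
n^{-1/2}\sum_{j=0}^{n-1}\bar m\circ\bar F^j\to_d N(0,\Sigma_0) \quad\text{on }(\bar Y,\bar\mu_Y),
\]
for some $\Sigma_0\in\R^{d\times d}$.

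Second, telescoping the coboundary in Theorem~\ref{thm-main}(a),
\[
\sum_{j=0}^{n-1}V\circ F^j=\Bigl(\sum_{j=0}^{n-1}\bar m\circ\bar F^j\Bigr)\circ\pi+\chi\circ F^n-\chi.
\]
Since $F$ preserves $\mu_Y$, the random variable $\chi\circ F^n$ has the same law as $\chi$ and is in particular almost surely finite, so $n^{-1/2}(\chi\circ F^n-\chi)\to 0$ in $\mu_Y$-probability. Combined with $\bar\mu_Y=\pi_*\mu_Y$ and Slutsky's lemma, this promotes the previous display to the CLT $n^{-1/2}\sum_{j=0}^{n-1}V\circ F^j\to_d N(0,\Sigma_0)$ on $(Y,\mu_Y)$.

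Finally, the passage from the induced system $(Y,F,\mu_Y)$ back to $(M,f,\mu_M)$ is a now-standard procedure (see for instance~\cite{MT04,Zweimueller07,MZ15}) that rescales the covariance to $\Sigma=\Sigma_0/\int r\,d\mu_Y$ and produces the desired CLT for $v$ on $(M,\mu_M)$. The main technical difficulty has already been absorbed into Theorem~\ref{thm-main}, which carries out the simultaneous quotienting and martingale-coboundary decomposition without assuming any contraction along stable manifolds beyond what is provided by returns to $Y$; once Theorem~\ref{thm-main} is in place, each step above is essentially routine and the only arithmetic input is $p\ge 2$, used precisely to guarantee $\bar m,\chi\in L^2$.
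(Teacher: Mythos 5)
Your proposal is correct and follows essentially the same route as the paper: apply Theorem~\ref{thm-main}, use the reverse martingale CLT for $\bar m$ on $(\bar Y,\bar\mu_Y)$, absorb the coboundary, and invoke the standard inducing argument (via~\cite{MT04}) to return to $(M,\mu_M)$. The only small divergence is that you dispose of $n^{-1/2}\chi\circ F^n$ via stationarity (convergence in probability, needing only that $\chi$ is a.s.\ finite), whereas the paper uses $\chi\in L^2$ and the ergodic theorem to get a.e.\ convergence, and the paper also makes explicit the auxiliary CLT for $r$ needed when invoking~\cite{MT04}, which you leave implicit in the citation.
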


\begin{proof}
As mentioned already, it is standard that $n^{-1/2}\sum_{j=0}^{n-1}\bar m\circ \bar F^j\to_d \widetilde G$ where
$\widetilde G\sim N(0,\widetilde\Sigma)$ with $\widetilde\Sigma=\int\bar m\,\bar m^T\,d\bar\mu_Y$.
Since $\pi_*\mu_Y=\bar\mu_Y$,
\begin{align*}
n^{-1/2}\sum_{j=0}^{n-1}(\bar m\circ\pi)\circ F^j
	=\Bigl(n^{-1/2}\sum_{j=0}^{n-1}\bar m\circ \bar F^j\Bigr)\circ \pi
	=_dn^{-1/2}\sum_{j=0}^{n-1}\bar m\circ \bar F^j\to_d \widetilde G.
\end{align*}
Next, $\sum_{j=0}^{n-1}V\circ F^j=\sum_{j=0}^{n-1}(\bar m\circ\pi)\circ F^j+\chi\circ F^n-\chi$.
Now $|\chi|^2\in L^1(Y)$, so it follows from the ergodic theorem that $\chi\circ F^n=o(n^{1/2})$ a.e.  Hence the distributional limits of $n^{-1/2}\sum_{j=0}^{n-1}V\circ F^j$ and $n^{-1/2}\sum_{j=0}^{n-1}(\bar m\circ\pi)\circ F^j$ coincide, yielding 
\mbox{$n^{-1/2}\sum_{j=0}^{n-1}V\circ F^j\to_d \widetilde G$}.

At the same time, the observable $r:Y\to\Z^+$ is well-defined in $L^2(\bar Y)$ and constant on partition elements $a\in\alpha$, so it standard (see for example~\cite{MN05}) that $r$ satisfies the CLT in  the form
$n^{-1/2}(\sum_{j=0}^{n-1}r\circ F^j-n\int r\,d\mu_Y)\to_d \tilde G$ for some normal distribution $\tilde G$.

Finally, it follows for example from~\cite{MT04} that the limit laws for $V$ and $r$ on $(Y,\mu_Y)$ imply one for $v$ on $(M,\mu_M)$, namely $n^{-1/2}\sum_{j=0}^{n-1}v\circ F^j\to_d G$
where $G=(\int r\,d\mu_Y)^{-1/2}\widetilde G$.
(In other words, $G\sim N(0,\Sigma)$ where $\Sigma=(\int r\,d\mu_Y)^{-1}\widetilde\Sigma$.)~
\end{proof}

Next we consider the functional CLT (FCLT), also known as the weak invariance principle.
Let $D([0,\infty),\R^d)$ denote the space of $d$-dimensional cadlag processes (continuous on the right, limits existing on the left) with the sup-norm topology.

\begin{cor}[FCLT] \label{cor-FCLT}
Suppose that $p\ge2$.  
Define the cadlag process $W_n\in D([0,\infty),\R^d)$ by setting
$W_n(t)=n^{-1/2}\sum_{j=0}^{[nt]-1}v\circ f^j$. Then $W_n\to_w W$ in
$D([0,\infty),\R^d)$ where $W$ is a $d$-dimensional Brownian motion with covariance matrix~$\Sigma$.
\end{cor}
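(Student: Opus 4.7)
The plan is to replay the proof of Corollary~\ref{cor-CLT} with the CLT replaced by its functional version at each step. By Theorem~\ref{thm-main}(b), the sequence $\{\bar m\circ \bar F^j\}$ forms an ergodic stationary reverse martingale increment sequence in $L^2(\bar Y)$ (since $p\ge 2$). The standard functional CLT for stationary ergodic martingale differences then yields
\[
\widetilde W_n(t):=n^{-1/2}\sum_{j=0}^{[nt]-1}\bar m\circ \bar F^j \;\longrightarrow_w\; \widetilde W \quad \text{in } D([0,\infty),\R^d),
\]
where $\widetilde W$ is Brownian motion with covariance $\widetilde\Sigma=\int \bar m\,\bar m^T\,d\bar\mu_Y$. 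Since $\pi_*\mu_Y=\bar\mu_Y$, the process $n^{-1/2}\sum_{j=0}^{[nt]-1}(\bar m\circ \pi)\circ F^j$ on $(Y,\mu_Y)$ has the same distribution, and hence the same weak limit.

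Next, by the coboundary decomposition of Theorem~\ref{thm-main}(a),
\[
n^{-1/2}\sum_{j=0}^{[nt]-1}V\circ F^j \;=\; n^{-1/2}\sum_{j=0}^{[nt]-1}(\bar m\circ\pi)\circ F^j \;+\; n^{-1/2}\bigl(\chi\circ F^{[nt]}-\chi\bigr).
\]
The coboundary term vanishes uniformly on compact time intervals in probability: for any $T,\epsilon>0$, $F$-invariance of $\mu_Y$ gives
\[
\mu_Y\Bigl(\sup_{0\le t\le T}n^{-1/2}|\chi\circ F^{[nt]}|>\epsilon\Bigr)\le ([nT]+1)\,\mu_Y(|\chi|>\sqrt n\,\epsilon),
\]
which tends to $0$ because $\chi\in L^2(Y)$ forces $n\,\mu_Y(|\chi|>\sqrt n\,\epsilon)\to 0$. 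By Slutsky's theorem, the FCLT then holds for $n^{-1/2}\sum_{j=0}^{[nt]-1}V\circ F^j$ with the same limit $\widetilde W$ in $D([0,\infty),\R^d)$.

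Finally, one transfers the FCLT from $(Y,\mu_Y)$ to $(M,\mu_M)$ along the lines of Corollary~\ref{cor-CLT}. Since $r\in L^2(Y)$ is constant on partition elements of the Gibbs-Markov factor $\bar Y$, $r$ satisfies an FCLT on $(\bar Y,\bar\mu_Y)$ and hence on $(Y,\mu_Y)$; combined with the ergodic theorem $n^{-1}\sum_{j=0}^{n-1}r\circ F^j\to \int r\,d\mu_Y$ a.s., a standard lap-counting/time-change argument such as those of~\cite{MT04,Zweimueller07,MZ15} converts the induced FCLT into one for $n^{-1/2}\sum_{j=0}^{[nt]-1}v\circ f^j$ on $(M,\mu_M)$, producing Brownian motion with covariance $\Sigma=(\int r\,d\mu_Y)^{-1}\widetilde\Sigma$.

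The main obstacle is this last step: in the functional setting the time change by $r$ must be handled uniformly in $t\in[0,T]$, which in turn requires controlling $\max_{0\le k\le n}r\circ F^k$ at a scale below $\sqrt n$ together with the FCLT for $r$; both ingredients follow from $r\in L^2$ by the same square-integrability argument used above for $\chi$. Apart from this bookkeeping, every step is a direct functional upgrade of the argument for Corollary~\ref{cor-CLT}.
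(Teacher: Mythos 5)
Your proposal is correct and follows essentially the same route as the paper: FCLT for the martingale increments $\bar m\circ\bar F^j$, kill the coboundary, then transfer the FCLT from $(Y,\mu_Y)$ to $(M,\mu_M)$ via the induced-process literature. The only (cosmetic) difference is in killing the coboundary: the paper obtains $\sup_{t\in[0,T]}n^{-1/2}|\chi\circ F^{[nt]}|\to 0$ \emph{almost everywhere} (from $n^{-1/2}\chi\circ F^n\to 0$ a.e., which follows from the ergodic theorem applied to $|\chi|^2$), whereas you get convergence \emph{in probability} via a union bound and the tail estimate $n\,\mu_Y(|\chi|>\sqrt n\,\epsilon)\to 0$; both are valid and both suffice for Slutsky.
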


\begin{proof}
Let $M_n(t)=n^{-1/2}\sum_{j=0}^{[nt]-1}\bar m\circ \bar F^j$.
Again it is standard that $M_n\to_w\widetilde W$
where $\widetilde W$ is a $d$-dimensional Brownian motion with covariance matrix $\widetilde \Sigma$.  Also, the fact that $n^{-1/2}\chi\circ F^n\to0$ a.e.\ easily implies that $\sup_{t\in[0,T]}n^{-1/2}\chi\circ F^{[nt]}\to0$ a.e\ for any $T$.
It follows that
$n^{-1/2}\sum_{j=0}^{[nt]-1}V\circ F^j\to_w \widetilde W$
in $D([0,T],\R^d)$ and hence in $D([0,\infty),\R^d)$.
Finally, it follows from standard arguments (for example as a special case of~\cite{MZ15}) that the limit law for $V$ again 
implies the one for $v$, namely $W_n\to_w W$ where $W=(\int r\,d\mu_Y)^{-1/2}\widetilde W$.
\end{proof}

\begin{cor}[Iterated FCLT] \label{cor-iterated}
Suppose that $p\ge2$.  
Define the cadlag processes $W_n\in D([0,\infty),\R^d)$
and $\BBW_n\in D([0,\infty),\R^{d\times d})$ where $W_n$ is as in Corollary~\ref{cor-FCLT} and
\[
\BBW_n^{\beta\gamma}(t)=\int_0^tW_n^\beta\,dW_n^\gamma=
n^{-1}
\!\!\!\!
\!\!\!\!
\sum_{0\le i<j\le [nt]-1}
\!\!\!\!
\!\!\!\!
v^\beta\circ f^i\,v^\gamma\circ f^j,\quad 1\le\beta,\gamma\le d.
\]
Then
\[
(W_n,\BBW_n)\to_w (W,\BBW)\quad\text{in}\quad
D([0,\infty),\R^d\times \R^{d\times d}),
\]
with $W$ as in Corollary~\ref{cor-FCLT} and
$\BBW^{\beta\gamma}(t)=\int_0^t W^\beta\,dW^\gamma+E^{\beta\gamma}t$,  where $E\in\R^{d\times d}$
and the stochastic integral $\int_0^t W^\beta\,dW^\gamma$ is given the It\^o interpretation.
\end{cor}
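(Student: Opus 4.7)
The plan is to follow the same three-level strategy used in the proofs of Corollaries~\ref{cor-CLT} and~\ref{cor-FCLT}: first establish the iterated FCLT for the martingale observable $\bar m$ on $(\bar Y,\bar\mu_Y)$, then transfer the conclusion to the induced observable $V$ on $(Y,\mu_Y)$ via the martingale-coboundary decomposition from Theorem~\ref{thm-main}, and finally pass from the induced map $F:Y\to Y$ to the original map $f:M\to M$.

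By Theorem~\ref{thm-main}(b), $\{\bar m\circ \bar F^j:\,j\ge0\}$ is a stationary ergodic sequence of $L^2$ reverse martingale increments. Standard iterated invariance principles for martingales (Kurtz--Protter, or the formulation adapted to the Gibbs--Markov setting used in~\cite{KM16,KMsub}) then yield weak convergence of $(M_n,\BBM_n)$ to $(\widetilde W,\,\int_0^\cdot \widetilde W^\beta\,d\widetilde W^\gamma+\widetilde E^{\beta\gamma}t)$ in $D([0,\infty),\R^d\times\R^{d\times d})$, where $M_n(t)=n^{-1/2}\sum_{j=0}^{[nt]-1}\bar m\circ \bar F^j$, $\BBM_n$ is the associated It\^o iterated integral, $\widetilde W$ is the Brownian motion with covariance $\widetilde\Sigma$ from the proof of Corollary~\ref{cor-CLT}, and $\widetilde E$ is determined by the reverse-martingale bracket.

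Next I would write $V=\bar M+\chi\circ F-\chi$ with $\bar M=\bar m\circ\pi$ and expand the double sum defining the iterated integral of the $V$-process by bilinearity, producing a pure $\bar M\bar M$ piece, two mixed pieces, and a pure coboundary piece. The $\bar M\bar M$ piece transfers the limit from $(\bar Y,\bar\mu_Y)$ to $(Y,\mu_Y)$ via $\pi_*\mu_Y=\bar\mu_Y$. For the mixed pieces, Abel summation collapses the inner coboundary sums to boundary terms, for instance
\[
\sum_{0\le i<j\le N-1}\bar M_i^\beta\,(\chi^\gamma\circ F^{j+1}-\chi^\gamma\circ F^j)=\sum_{i=0}^{N-2}\bar M_i^\beta\,(\chi^\gamma\circ F^N-\chi^\gamma\circ F^{i+1}),
\]
and maximal inequalities for the reverse martingale together with $\chi\in L^2(Y)$ show that these pieces, rescaled by $n^{-1}$, vanish in the sup-norm topology in probability. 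The pure coboundary piece telescopes similarly, and by Birkhoff's ergodic theorem contributes a deterministic drift $E_F^{\beta\gamma}t$ on $(Y,\mu_Y)$.

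Finally, I would invoke the suspension lemmas of~\cite{KM16} (see also~\cite{MZ15}) to convert the weak convergence of the $V$-based pair on $(Y,\mu_Y)$ into the desired convergence of $(W_n,\BBW_n)$ on $(M,\mu_M)$: the time change by $1/\int r\,d\mu_Y$ rescales the Brownian motion as in Corollary~\ref{cor-FCLT} and adjusts the quadratic contribution, while the replacement of $F$-orbit sums by $f$-orbit sums between consecutive returns introduces further explicit drift, which combines with $\widetilde E$ and $E_F$ to yield the matrix $E$ in the statement. The main obstacle is the uniform-in-$t$ control needed for joint convergence in the sup-norm topology of $D([0,\infty),\R^d\times\R^{d\times d})$ (rather than in finite-dimensional distributions) and the careful bookkeeping of the several contributions to $E$ across both the coboundary step and the suspension step; the remaining ingredients are standard once Theorem~\ref{thm-main} is available.
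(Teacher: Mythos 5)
Your overall strategy matches the paper's at the level of architecture, but the paper's actual proof is two sentences: it cites~\cite[Theorem~5.2]{KM16} to obtain the iterated FCLT for the induced observable $V$ directly from the $L^2$ martingale-coboundary decomposition of Theorem~\ref{thm-main} (that result already packages the martingale step and the coboundary bookkeeping), and it cites the inducing argument of~\cite[Theorem~10.2]{KM16} to pass from $(Y,\mu_Y)$ to $(M,\mu_M)$. Your plan is to reconstruct the content of~\cite[Theorem~5.2]{KM16} by hand, which is a legitimate route, but there is one genuine error in your bookkeeping: the two mixed $\bar M$/coboundary double sums do \emph{not} vanish after rescaling by $n^{-1}$. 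After Abel summation, a term such as $n^{-1}\sum_{i}\bar M_i^\beta\,\chi^\gamma\circ F^{i+1}$ converges by Birkhoff to the nonzero constant $\int \bar m^\beta\cdot(\chi^\gamma\circ F)\,d\mu_Y$ (and likewise for the other mixed piece), so both mixed pieces contribute to the drift $E$, not just the pure coboundary piece as you state. The only contributions that genuinely vanish are the boundary terms involving $\chi\circ F^N$ and $\chi$ paired against tight partial sums. If you correct this so that all three non-martingale pieces feed into $E$ (together with the reverse-martingale bracket and the further drift produced by the suspension step), your proof would be a correct, self-contained version of what the paper delegates to~\cite{KM16}.
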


\begin{proof}
The iterated FCLT is immediate for $V$ by~\cite[Theorem~5.2]{KM16} and implies the same for $v$ by the inducing method described in the proof of~\cite[Theorem~10.2]{KM16}.
\end{proof}

\section{Proof of the main theorem}
\label{sec-proof}

This section is concerned with the proof of Theorem~\ref{thm-main}. 
In Subsection~\ref{sec-quotient}, we show how to relate the induced observable
$V:Y\to\R^d$ with a quotiented observable $\bar V:\bar Y\to\R^d$.
In Subsection~\ref{sec-GM}, we recall the definition and properties
of Gibbs-Markov maps.
In Subsection~\ref{sec-key}, we complete the proof of Theorem~\ref{thm-main}.

\subsection{The quotienting step}
\label{sec-quotient}

Given $v:M\to\R^d$ H\"older with $\int v\,d\mu_M=0$, define the induced observable
$V:Y\to\R^d$, $V(y)=\sum_{\ell=0}^{r(y)-1}v(f^\ell y)$.
Then $\int V\,d\mu_Y=0$, and moreover $|V|\le |v|_\infty r$ so that
$V\in L^p(Y)$ whenever $r\in L^p(Y)$.  (Here $|\;|$ denotes the Euclidean norm on $\R^d$.)
Now define $\chi_1:Y\to\R^d$,
\[
\chi_1(y)=\sum_{j=0}^\infty V(F^j\hat y)- V(F^jy),
\]
where $\hat y$ is the unique point in $W^s(y)\cap W^u$.
Then 
\begin{align} \label{eq-hatV}
V=\widehat V+\chi_1\circ F-\chi_1,
\end{align}
where
\begin{align*} 
\widehat V= \sum_{j=0}^\infty A_j,\qquad A_j(y)=\begin{cases} 
V(\hat y), & j=0
\\ V(F^j\hat y)-V(F^{j-1}\widehat{Fy}), &  j\ge1 \end{cases}.
\end{align*} 
Note that $\widehat V:Y\to\R^d$ is constant on stable leaves and hence projects
to $\bar V:\bar Y\to\R^d$.
Similarly $A_j:Y\to\R^d$ projects to $\bar A_j:\bar Y\to\R^d$ for $j\ge0$.

Set $\gamma=\gamma_0^\eta$ where $\eta$ is the H\"older exponent for $v$.
Let $|v|_\eta$ denote the H\"older constant of $v$.

\begin{prop}  \label{prop-Lp}
If $r\in L^p(Y)$ for some $p\ge1$, then
$\widehat V$ and $\chi_1$ lie in $L^p(Y)$ (and hence $\bar V\in L^p(\bar Y)$).
Moreover, $|A_0|_p\le |r|_p|v|_\infty$ and
$|A_j|_p\le C|r|_p |v|_\eta \gamma^j$ for $j\ge1$.
\end{prop}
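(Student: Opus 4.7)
The plan is to estimate $|A_0|_p$, $|A_j|_p$ for $j\ge1$, and $|\chi_1|_p$ one at a time, then deduce $\widehat V\in L^p(Y)$ from the telescoping bound on the $A_j$ and the ensuing geometric series. The underlying tools are: (i) $r$ is constant on stable leaves, so pairs of points sharing a stable leaf have matching return times and the defining sums of $V$ can be compared term by term; (ii) the contraction estimate (A2)(i); (iii) H\"older continuity of $v$; and (iv) $F$-invariance of $\mu_Y$, so that $|r\circ F^j|_p=|r|_p$ for every $j$.

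The bound on $A_0$ is immediate: since $\hat y\in W^s(y)$ and $r$ is constant on stable leaves, $r(\hat y)=r(y)$, hence $|A_0(y)|=|V(\hat y)|\le |v|_\infty r(y)$, giving $|A_0|_p\le |v|_\infty |r|_p$.

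For $A_j$ with $j\ge1$, I would first observe that $F\hat y\in W^s(Fy)$ by (A1) and $\widehat{Fy}\in W^s(Fy)$ by definition, so $F^j\hat y=F^{j-1}(F\hat y)$ and $F^{j-1}\widehat{Fy}$ lie on the common stable leaf $W^s(F^jy)$; in particular they have the same return time $n:=r(F^jy)$. Writing each $V$ as an orbit sum of length $n$ and subtracting term by term,
\[
|A_j(y)|\le \sum_{\ell=0}^{n-1}\bigl|v(f^\ell F^{j-1}(F\hat y))-v(f^\ell F^{j-1}\widehat{Fy})\bigr|\le n\,|v|_\eta\,(C\gamma_0^{j-1})^\eta,
\]
by H\"older continuity combined with (A2)(i) applied to the stable-leaf pair $(F\hat y,\widehat{Fy})$ at iterate $j-1$. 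Since $\gamma=\gamma_0^\eta$, taking $L^p$-norms and using $F$-invariance of $\mu_Y$ yields the stated estimate $|A_j|_p\le C|v|_\eta |r|_p\gamma^j$ (absorbing $C^\eta/\gamma$ into the constant). Summing and applying Minkowski gives $\widehat V\in L^p(Y)$, and since $\widehat V=\bar V\circ\pi$ and $\bar\mu_Y=\pi_*\mu_Y$ we also obtain $\bar V\in L^p(\bar Y)$.

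The estimate for $\chi_1$ follows the same template with the pair $(y,\hat y)$ instead of $(F\hat y,\widehat{Fy})$: since $F^j\hat y\in W^s(F^jy)$, the return times match at $r(F^jy)$, and $|V(F^j\hat y)-V(F^jy)|\le r(F^jy)|v|_\eta (C\gamma_0^j)^\eta$, which $p$-integrates against $F$-invariance of $\mu_Y$ and sums geometrically in $j$. The only genuinely delicate point, and the only place care is really needed, is checking at each step that the two relevant points lie on a common stable leaf so that both (A2)(i) and the equality of return times are available; once (A1) is used systematically to propagate the hat operation through $F$-iterates, everything else reduces to H\"older's and Minkowski's inequalities together with a geometric series.
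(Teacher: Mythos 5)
Your proposal is correct and follows essentially the same route as the paper: term-by-term comparison of the two orbit sums in $A_j$ using (A1) to place $F^j\hat y$ and $F^{j-1}\widehat{Fy}$ on a common stable leaf, (A2)(i) with iterate $j-1$ to get the $\gamma_0^{j-1}$ contraction, H\"older continuity of $v$, and $F$-invariance of $\mu_Y$ to conclude $|r\circ F^j|_p=|r|_p$; the $\chi_1$ and $\widehat V$ estimates then follow by Minkowski and a geometric series. The paper leaves the stable-leaf bookkeeping implicit where you spell it out, but the argument is the same.
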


\begin{proof} For $j\ge1$, we have 
\[
A_j(y)=\sum_{\ell=0}^{r(F^jy)-1} v(f^\ell F^j\hat y)-v(f^\ell  F^{j-1}\widehat{Fy}),
\]
and so $|A_j(y)|\le \sum_{\ell=0}^{r(F^jy)-1}|v|_\eta d_M(f^\ell F^j\hat y,f^\ell F^{j-1}\widehat{Fy})^\eta$.   Now $F\hat y$ and $\widehat{Fy}$ lie in the same stable manifold, so by property (A2)(i),
$d_M(f^\ell F^j\hat y,f^\ell F^{j-1}\widehat{Fy})\le C\gamma_0^{j-1}\ll\gamma_0^j$.
Hence $|A_j|\ll (r\circ F^j)|v|_\eta \gamma^j$ and 
so $|A_j|_p\ll  |r\circ F^j|_p |v|_\eta \gamma^j
= |r|_p |v|_\eta \gamma^j$.
The simpler calculation for $A_0$ is omitted.

It is now immediate that $\widehat V\in L^p(Y)$ and the calculation for $\chi_1$ is similar.
\end{proof}

\subsection{Gibbs-Markov maps}
\label{sec-GM}

From now on, we write $\bar \mu$ instead of $\bar\mu_Y$.
Suppose that $(\bar Y,\bar\mu)$ is a Lebesgue probability
space with countable measurable partition $\alpha$.  Let $\bar F:\bar Y\to\bar Y$
be an ergodic measure-preserving Markov map transforming each partition element bijectively onto a union of partition elements.  
Fix $\theta\in(0,1)$ and define $d_\theta(y,y')=\theta^{s(y,y')}$ where as before the
{\em separation time} $s(y,y')$ is the least integer $n\ge0$ such that $\bar F^ny$ and $\bar F^ny'$ lie in distinct partition elements.  It is assumed that the partition $\alpha$ separates orbits of $\bar F$, so $s(y,y')$ is finite for all $y\neq y'$ guaranteeing that $d_\theta$ is a metric.
Given $V:\bar Y\to\R^d$ Lipschitz, we define $\|V\|_\theta=|V|_\infty+|V|_\theta$ where $|V|_\theta=\sup_{y\neq y'}|V(y)-V(y')|/d_\theta(y,y')$.

Define $g=d\bar\mu/d(\bar\mu\circ \bar F):\bar Y\to\R$.
We require that $\sup_{a\in\alpha}\sup_{y,y'\in a:y\neq y'}|\log g(y)-\log g(y')|/d_\theta(y,y')<\infty$.
We also require the big image condition $\inf_{a\in\alpha}\bar\mu(\bar F a)>0$.  Then $\bar F:\bar Y\to\bar Y$ is called a {\em Gibbs-Markov} map.

Let $\alpha_n=\bigvee_{j=0}^{n-1}  \bar F^{-j} \alpha$ denote the set of $n$-cylinders in $\bar Y$.
Write $g_n= (g\circ \bar F^{n-1})\cdots (g\circ \bar F) \cdot g $. 
A consequence of the above definitions is that there exists a constant $C_1>0$ such that
\begin{align} \label{eq-GM}
g_n(y)\le C_1\mu(a), \quad\text{and}\quad |g_n(y)-g_n(y')|\le C_1\mu(a)d_\theta(\bar F^ny,\bar F^ny'),
\end{align}
for all $y,y'\in a$, $a\in\alpha_n$, $n\ge1$. 

Gibbs-Markov maps are mixing if and only if they are topologically mixing (that is, for all $a,b\in\alpha$ there exists $N\ge1$ such that $b\subset\bar F^na$ for all $n\ge N$).   In the Young tower setting of 
Section~\ref{sec-main} it is always possible to choose $\bar F$ to be mixing.
(Often $\bar F$ is assumed to have full branches, $\bar F a=\bar Y$ for all $a\in\alpha$, 
which certainly suffices for mixing.)

The transfer operator $L:L^1(\bar Y)\to L^1(\bar Y)$ is given by
$$
(LV)(y)=\sum_{a\in\alpha}g(y_a)V(y_a)
$$ 
where $y_a$ is the unique preimage of $y$ in the partition element $a\in\alpha$ under $\bar F$.  Similarly,
$(L^nV)(y)=\sum_{a\in\alpha_n}g_n(y_a)V(y_a)$ where $y_a$ is the unique preimage of $y$ in $a\in\alpha_n$ under $\bar F^n$.

\subsection{Completion of the proof}
\label{sec-key}

\begin{prop}  \label{prop-key}
Let $\theta=\gamma^{1/2}$.   Then for all $j\ge1$
$$
\|L\bar A_0\|_\theta\le C|r|_1(|v|_\infty+|v|_\eta)
	\quad \text{and} \quad
\|L^{j+1}\bar A_j\|_\theta \le C|r|_1|v|_\eta \gamma^{j/2}.
$$
\end{prop}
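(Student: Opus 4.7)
The plan is to establish two complementary pointwise estimates on $\bar A_j$ and then apply $L^{j+1}$, interpolating between them in a case analysis on the separation time to produce the $\gamma^{j/2}$ decay. The proof of Proposition~\ref{prop-Lp} already yields the pointwise bound $|\bar A_j(y)|\le C|v|_\eta \gamma^j\, r(\bar F^j y)$, which immediately gives the sup-norm estimate $|L^{j+1}\bar A_j|_\infty \le C|v|_\eta|r|_1\gamma^j$ via the Gibbs--Markov bound $g_{j+1}(y_a)\le C_1\bar\mu(a)$ and invariance of $\bar\mu$ under $\bar F^j$.

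The new ingredient I would establish is a H\"older-type bound on $\bar A_j$: if $y, y' \in \bar Y$ have $s(y, y') = M > j$, then
\[
|\bar A_j(y) - \bar A_j(y')| \le C|v|_\eta\, r(\bar F^j y)\,\gamma^{M-j}.
\]
Writing lifts in $Y$ as $A_j(y) = [V(F^j \hat y) - V(F^j \hat y')] - [V(F^{j-1}\widehat{Fy}) - V(F^{j-1}\widehat{Fy'})]$ and noting that $\hat y, \hat y' \in W^u$ with $s(\hat y, \hat y')=M$ (and similarly $\widehat{Fy}, \widehat{Fy'}\in W^u$ with separation time $M-1$), this follows term by term from the expanding-direction estimate (A2)(ii), where the $r(\bar F^j y)$ factor arises from the length of the orbit segment in $V$.

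Next I would decompose
\[
(L^{j+1}\bar A_j)(y) - (L^{j+1}\bar A_j)(y') = T_1 + T_2,
\]
with $T_1 = \sum_{a\in\alpha_{j+1}} g_{j+1}(y_a)[\bar A_j(y_a) - \bar A_j(y'_a)]$ and $T_2 = \sum_{a}[g_{j+1}(y_a) - g_{j+1}(y'_a)]\bar A_j(y'_a)$. The crucial combinatorial fact is that $r(\bar F^j y_a)$ is constant on each $(j{+}1)$-cylinder $a$, equal to $r|_{b(a)}$ where $b(a)\in\alpha$ is the unique element with $\bar F^j a \subset b(a)$, and by invariance $\sum_a \bar\mu(a)\,r|_{b(a)} = \int r\,d\bar\mu = |r|_1$. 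Combined with the Gibbs--Markov estimates~\eqref{eq-GM}, this immediately controls $T_2$ by $C|v|_\eta|r|_1\gamma^j\theta^M$.

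The main obstacle is controlling $T_1$. The H\"older bound at separation time $s(y_a, y'_a) = j+1+M$ yields $|T_1| \le C|v|_\eta|r|_1\gamma^{M+1}$, while the triangle inequality combined with the pointwise bound yields the alternative $|T_1| \le C|v|_\eta|r|_1\gamma^j$. A short case analysis on $M \ge j$ versus $M < j$ shows that each estimate gives $|T_1| \le C|v|_\eta|r|_1\gamma^{j/2}\theta^M$ in its favourable range; this is precisely where the choice $\theta = \gamma^{1/2}$ enters, balancing the H\"older and sup contributions. The bound on $L\bar A_0$ then follows the same pattern using only the simpler estimates $|A_0|\le |v|_\infty r$ and $|A_0(y)-A_0(y')|\le C|v|_\eta r(y)\gamma^M$ for $s(y,y')\ge 1$, with $|v|_\infty$ playing the role that $|v|_\eta\gamma^j$ plays above.
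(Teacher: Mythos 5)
Your proposal is correct and follows essentially the same route as the paper's proof: the decomposition of $L^{j+1}\bar A_j(y)-L^{j+1}\bar A_j(y')$ into a ``weight-difference'' term ($T_2$, the paper's $I$) and an ``observable-difference'' term ($T_1$, the paper's $II$), the two complementary estimates on the latter (sup bound $\gamma^j$ from Proposition~\ref{prop-Lp}, H\"older bound $\gamma^{s(y,y')+1}$ from (A2)(ii) via pairing), and the balancing of the two via $\theta=\gamma^{1/2}$ are all exactly the paper's argument. Your explicit case split $M\ge j$ versus $M<j$ is just an unpacking of the paper's inequality $\gamma^{\max\{j,s(y,y')\}}\le\gamma^{j/2}\gamma^{s(y,y')/2}$.
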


\begin{proof}   
We give the proof for $j\ge1$, omitting the simpler case $j=0$.
Observe that for any $n>k\ge0$,
\begin{align} \label{eq-r} \nonumber
\sum_{a\in\alpha_n}\mu(a) r(\bar F^ka) & =
\sum_{b\in\alpha_{n-k}}\sum_{a\in\alpha_n:\bar F^ka=b}\mu(a)r(b)=
\sum_{b\in\alpha_{n-k}}\mu(\bar F^{-k}b)r(b)
\\ & =\sum_{b\in\alpha_{n-k}}\mu(b)r(b)=|r|_1.
\end{align}
Also by the proof of Proposition~\ref{prop-Lp}, 
\begin{align} \label{eq-W}
|\bar A_j|\ll (r\circ F^j)|v|_\eta\gamma^j.
\end{align}

Let $y\in \bar Y$.   Then
\[
(L^{j+1}\bar A_j)(y)=\sum_{a\in\alpha_{j+1}}g_{j+1}(y_a)\bar A_j(y_a).
\]
Hence by~\eqref{eq-GM},~\eqref{eq-r} and~\eqref{eq-W},
\begin{align*}
|L^{j+1}\bar A_j|_\infty & \le C_1 \sum_{a\in\alpha_{j+1}}\mu(a)|1_a\bar A_j|_\infty 
 \ll \sum_{a\in\alpha_{j+1}}\mu(a) r(\bar F^ja)|v|_\eta \gamma^j
=|r|_1|v|_\eta\gamma^j.
\end{align*}

Next, let $y,y'\in \bar Y$.   Then
\[
(L^{j+1}\bar A_j)(y)-
(L^{j+1}\bar A_j)(y')=I+II,
\]
where
\begin{align*}
I & =\sum_{a\in\alpha_{j+1}}(g_{j+1}(y_a)-g_{j+1}(y_a'))\bar A_j(y_a), \quad
II  =\sum_{a\in\alpha_{j+1}}g_{j+1}(y_a')(\bar A_j(y_a)
-\bar A_j(y_a')).
\end{align*}
By~\eqref{eq-GM},~\eqref{eq-r} and~\eqref{eq-W},
\[
|I|\le C_1 \sum_{a\in\alpha_{j+1}} \mu(a)d_\theta(y,y')|1_a\bar A_j|_\infty \ll |r|_1|v|_\eta \gamma^j d_\theta(y,y').
\]
We estimate $II$ in two ways depending on whether $j$ is large or small relative to the separation time $s(y,y')$.  
If $j$ is large, we estimate the two terms $\bar A_j$ separately
as done for the sup norm to obtain
$|II|\ll |r|_1|v|_\eta\gamma^j$.
But alternatively, we can pair up the two terms in $\bar A_j$ as a difference and write $II=Z+Z'$ where
\[
Z(y)=\sum_{a\in\alpha_{j+1}} g_{j+1}(y_a') 
(V(F^j\widehat{y_a})-V(F^j\widehat{y_a'})),
\]
with a similar formula for $Z'$.
By~(A2)(ii), 
\begin{align*}
|V(F^j\widehat{y_a})-V(F^j\widehat{y_a'})| & \le 
\sum_{\ell=0}^{r(F^jy_a)-1} |v|_\eta\,d_M(f^\ell F^j\widehat{y_a},f^\ell F^j \widehat{y_a'})^\eta
\ll \sum_{\ell=0}^{r(F^jy_a)-1} |v|_\eta\,\gamma^{s(y_a,y_a')-j}
\\ & =  \sum_{\ell=0}^{r(F^jy_a)-1} |v|_\eta\,\gamma^{s(y,y')+1}
 =   r(F^ja)|v|_\eta\,\gamma^{s(y,y')+1}.
\end{align*}
By~\eqref{eq-GM} and~\eqref{eq-r},
\[
|Z(y)|\ll \sum_{a\in\alpha_{j+1}} \mu(a)r(F^ja)|v|_\eta \gamma^{s(y,y')}=
|r|_1|v|_\eta \gamma^{s(y,y')}.
\]
Similarly for $Z'(y)$, and hence
$|II|\ll |r|_1|v|_\eta\gamma^{s(y,y')}$.
It follows from these two estimates that
$|II|\ll |r|_1|v|_\eta\gamma^{\max\{j,s(y,y')\}}$.
But
\[
\gamma^{\max\{j,s(y,y')\}}\le \gamma^{j/2}\gamma^{s(y,y')/2}
=\gamma^{j/2}d_{\gamma^{1/2}}(y,y').
\]
Taking $\theta=\gamma^{1/2}$ we obtain that
$|II|\ll |r|_1|v|_\eta\gamma^{j/2}d_\theta(y,y')$.

This completes the estimate for $|L^{j+1}\bar A_j|_\theta$ and hence
$\|L^{j+1}\bar A_j\|_\theta$.
\end{proof}

\begin{lemma} \label{lem-key}
Suppose that $r\in L^p(Y)$ where $p\ge1$.
There exists $C>0$, $\tau\in(0,1)$, such that 
$|L^k\bar V|_p\le C\tau^k\|v\|_\eta$ for all $k\ge1$.
\end{lemma}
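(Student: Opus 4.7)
The plan is to expand $\bar V=\sum_{j=0}^\infty \bar A_j$ using the telescoping decomposition from Subsection~\ref{sec-quotient} and to estimate $L^k\bar V=\sum_{j=0}^\infty L^k\bar A_j$ by splitting the sum at the index $j=k$. The two ranges are handled with different tools: the tail $j\ge k$ is controlled by the $L^p$ estimates of Proposition~\ref{prop-Lp} together with $L^p$ contractivity of $L$, while the head $j<k$ is controlled by Proposition~\ref{prop-key} combined with the spectral gap of $L$ on $\theta$-Lipschitz functions.

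For the tail $j\ge k$, the transfer operator is a contraction on every $L^p(\bar Y)$, so
$$\sum_{j\ge k}|L^k\bar A_j|_p\le \sum_{j\ge k}|\bar A_j|_p\le C|r|_p|v|_\eta\sum_{j\ge k}\gamma^j\ll\gamma^k|r|_p|v|_\eta$$
by Proposition~\ref{prop-Lp}.

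For the head $j<k$, write $L^k\bar A_j=L^{k-j-1}(L^{j+1}\bar A_j)$ and set $\phi_j:=L^{j+1}\bar A_j$. Proposition~\ref{prop-key} (with $\theta=\gamma^{1/2}$) gives $\|\phi_j\|_\theta\ll|r|_1\|v\|_\eta\gamma^{j/2}$. I would then invoke the standard spectral gap for the transfer operator of a mixing Gibbs-Markov map: there exist $C>0$, $\rho\in(0,1)$ such that $|L^m\phi|_\infty\le C\rho^m\|\phi\|_\theta$ whenever $\int\phi\,d\bar\mu=0$. The catch is that $\phi_j$ need not be mean zero; its mean $c_j:=\int\phi_j\,d\bar\mu=\int\bar A_j\,d\bar\mu$ satisfies $|c_j|\le|\bar A_j|_1\le C|r|_1|v|_\eta\gamma^j$ by Proposition~\ref{prop-Lp}. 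Apply the spectral gap to $\phi_j-c_j$ and use $|\cdot|_p\le|\cdot|_\infty$ to obtain $|L^{k-j-1}(\phi_j-c_j)|_p\ll\rho^{k-j-1}\gamma^{j/2}\|v\|_\eta$, and sum the geometric double series $\sum_{j=0}^{k-1}\rho^{k-j-1}\gamma^{j/2}\ll\tau^k$ for any $\tau\in(\max\{\rho,\gamma^{1/2}\},1)$.

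The remaining contribution from the constants is $\sum_{j=0}^{k-1}L^{k-j-1}c_j=\sum_{j=0}^{k-1}c_j$, and this is where the hypothesis $\int v\,d\mu_M=0$ (equivalently $\int\bar V\,d\bar\mu=0$) is essential: it gives $\sum_{j=0}^\infty c_j=0$, so $\sum_{j=0}^{k-1}c_j=-\sum_{j=k}^\infty c_j$, which is exponentially small in $k$ by the same geometric decay of $|c_j|$. Assembling the three contributions yields $|L^k\bar V|_p\le C\tau^k\|v\|_\eta$. The main obstacle is precisely this mean-zero issue: one cannot apply the spectral gap to each $\phi_j$ individually, and the argument only closes because the global mean of $\bar V$ vanishes, allowing the accumulated constants $\sum_{j=0}^{k-1}c_j$ to be re-expressed as a rapidly decaying tail.
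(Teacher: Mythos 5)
Your proof is correct and follows essentially the same strategy as the paper's: split the sum $\sum_j L^k\bar A_j$ at $j=k$, control the tail by $L^p$-contractivity of $L$ together with Proposition~\ref{prop-Lp}, and control the head via Proposition~\ref{prop-key} and the Gibbs-Markov spectral gap. The only difference is a matter of bookkeeping around the mean-zero issue: the paper centres each term from the outset, writing $\bar V=\sum_{j\ge0}(\bar A_j-\int\bar A_j)$ (legitimate since $\int\bar V=0$ forces $\sum_j\int\bar A_j=0$), so the spectral gap applies directly to $L^{j+1}\bar A_j-\int\bar A_j$ and no separate ``accumulated constants'' term appears; you leave the $\bar A_j$ uncentred, extract the constants $c_j$ in the head, and then re-express $\sum_{j<k}c_j=-\sum_{j\ge k}c_j$ as a geometrically small tail. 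Both are valid, and your version makes visible exactly where the hypothesis $\int v\,d\mu_M=0$ enters, but it is not a genuinely different argument.
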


\begin{proof}
By Proposition~\ref{prop-Lp} the series $\sum_{j=0}^\infty\bar A_j$ converges absolutely to $\bar V$ in $L^p$ and hence in $L^1$, and $\int\bar V=0$, so $\bar V=\sum_{j=0}^\infty (\bar A_j-\int\bar A_j)$.
Hence, since $L1=1$,
\begin{align*}
|L^k\bar V|_p\le \sum_{j=0}^\infty |L^k\bar A_j-{\SMALL\int}\bar A_j|_p
& = \sum_{j\ge k}|L^k\bar A_j-{\SMALL\int}\bar A_j|_p
+ \sum_{j<k}|L^k\bar A_j-{\SMALL\int}\bar A_j|_p
\\ & \le  2\sum_{j\ge k}|\bar A_j|_p
+ \sum_{j<k}\|L^k\bar A_j-{\SMALL\int}\bar A_j\|_\theta
\end{align*}
for all $k\ge1$.
By Proposition~\ref{prop-Lp}, 
\[
\sum_{j\ge k}|\bar A_j|_p =
\sum_{j\ge k}|A_j|_p \ll
\sum_{j\ge k}\gamma^j\ll \gamma^k.
\]
Since $\bar F$ is a mixing Gibbs-Markov map,
the transfer operator $L$ has a spectral gap in the space of H\"older continuous
observables, and hence there exists $C>0$, $\tau\in(0,1)$ so that $\|L^nV\|_\theta \le C\tau^n \|V\|_\theta$ for all mean zero $V$ and $n\ge1$. 
Thus, by Proposition~\ref{prop-key}, for $k>j$ we have
\begin{align*}
\|L^k\bar A_j-{\SMALL\int}\bar A_j\|_\theta
 & =\|L^{k-j-1}(L^{j+1}\bar A_j-{\SMALL\int} \bar A_j)\|_\theta\ll \tau^{k-j}\|L^{j+1}\bar A_j\|_\theta \\ & \ll \tau^{k-j}\gamma^{j/2}=\tau^k\gamma'^j
\end{align*}
where $\gamma'=\tau^{-1}\gamma^{1/2}$.   
We can increase $\tau\in(0,1)$ if necessary so that $\gamma'\in(0,1)$.
Then 
\[
\sum_{j<k}\|L^k\bar A_j-{\SMALL\int}\bar A_j\|_\theta \ll
\sum_{j=0}^\infty \tau^k\gamma'^j\ll \tau^k,
\]
completing the proof.
\end{proof}

\begin{pfof}{Theorem~\ref{thm-main}} By Lemma~\ref{lem-key},  $\bar\chi_2=\sum_{k=1}^\infty L^k\bar V\in L^p$.
Write $\bar V=\bar m+\bar\chi_2\circ \bar F-\bar\chi_2$; then $\bar m\in L^p$ and $L\bar m=0$.
Now define $\chi=\chi_1+\bar\chi_2\circ\pi$, so $\chi\in L^p$ by Proposition~\ref{prop-Lp}.
By equation~\eqref{eq-hatV},
$V=\bar V\circ\pi+\chi_1\circ F-\chi_1=\bar m\circ\pi+\chi\circ F-\chi$.
This finishes the proof of the theorem.
\end{pfof}

\section{Examples}
\label{sec-example}

In this section we mention some examples to which the results in this paper apply.

\begin{examp} \label{ex-baker}
Consider an {\em intermittent baker's transformation}
$f:M\to M$, $M=[0,1]\times[0,1]$, of the form
\[
f(x)=\begin{cases} (g(x_1), g^{-1}(x_2)), & x_1\in[0,\frac12), x_2\in[0,1] \\
(2x_1-1, (x_2+1)/2), & x_1\in[\frac12,1], x_2\in[0,1] 
\end{cases}
\]
where $g:[0,\frac12]\to[0,1]$ is a 
branch of a one-dimensional intermittent map~\cite{PomeauManneville80} with a neutral fixed point at $x_1=0$.
For definiteness, take
\[
g(x_1)=x_1(1+2^\gamma x_1^\gamma),
\]
with $\gamma\in(0,1)$.
Then the first coordinate of $f$ is a nonuniformly expanding map $f_1:[0,1]\to[0,1]$ of the
type studied in~\cite{LiveraniSaussolVaienti99}.

Note that $f$ maps $[0,\frac12]\times[0,1]$ diffeomorphically onto $[0,1]\times[0,\frac12]$ and 
$[\frac12,1]\times[0,1]$ diffeomorphically onto $[0,1]\times[\frac12,1]$,
with a neutral fixed point at $x=(0,0)$.   Moreover $(df)_{(0,0)}=I$.

Let $Y=[\frac12,1]\times[0,1]$ with first return time $r:Y\to\Z^+$
and set $F=f^r:Y\to Y$ and $F_1=f_1^r:[\frac12,1]\to\frac12,1]$.
Note that $F_1$ is the first return map of $f_1$ to $[\frac12,1]$.

It is easily checked that $F$ is a uniformly hyperbolic map satisfying (A1)--(A4) with physical 
measure $\mu_M$.  The stable foliation $\cW^s$ consists of vertical lines.
The quotient
uniformly expanding map $\bar F:\bar Y\to\bar Y$ can be identified
with $F_1:[\frac12,1]\to[\frac12,1]$ and has a unique absolutely continuous
invariant measure $\bar\mu_Y$ by~\cite{LiveraniSaussolVaienti99}.  The partition $\alpha$ consists of the intervals
$\{y\in\bar Y:r(y)=n\}$. Moreover, $r\in L^p(\bar Y)$ for all $p<\frac{1}{\gamma}$
so Corollaries~\ref{cor-CLT},~\ref{cor-FCLT} and~\ref{cor-iterated} apply for
$\gamma<\frac12$.
It is immediate from the construction that contraction rates
along stable manifolds are identical to expansion rates along unstable manifolds, making necessary the methods in this paper.
\end{examp}

\begin{examp} \label{ex-solenoid}
A rather different set of examples can be constructed along the lines of the 
Smale-Williams solenoids~\cite{Smale67,Williams67}.  First modify the map $f_1$
from Example~\ref{ex-baker}
near $\frac12$ so that
$f_1$ is $C^2$ on $(0,1)$.  Let $M=[0,1]\times D$ where
$D$ is the closed unit disk in $\R^{n-1}$ and define
$f(x_1,x_2)=(f(x_1),h(x_1,x_2))$ for $(x_1,x_2)\in [0,1]\times D$
where $h:M\to D$ is $C^2$ and $h(x_1,x_2)\equiv x_2$ near $(0,0)$.
We require
that $|\partial_{x_2}h|<1$ on $[\frac12,1]\times D$.
Note that the invariant set $\{0\}\times D$ is neutral in all directions.
Finally, perturb to obtain a $C^2$ embedding $f:M\to M$ such that 
$f$ is unchanged near $(0,0)$.

Let $Y=[\frac12,1]\times D$ and define the first return time
$r:Y\to\Z^+$ and first return map $F=f^r:Y\to Y$.
For small enough perturbations, $F$ is uniformly hyperbolic and conditions (A1), (A2) and (A4) are easily checked.
Moreover, (A3) is satisfied provided the perturbation $f$ is chosen so that
$\bar F$ is Markov.

Finally, since contraction and expansion is achieved only off a neighbourhood of $(0,0)$ it is again clear that the contraction for $f$ is as weak as the expansion.
\end{examp}

\begin{examp} \label{ex-flower}
It was discovered by Bunimovich in the 1970’s that billiard tables with focusing boundary components may show hyperbolic behaviour. 
For the first examples of such tables, constructed for example in~\cite{Bunimovich73},
the boundary components are either dispersing, or focusing arcs of circles, subject to some further technical constraints. 
Given their typical shape, such billiards are often called {\em Bunimovich flowers}. 
Chernov \& Zhang~\cite{ChernovZhang05} show that the
billiard map has decay of correlations $O((\log n)^3 /n^2)$. (The logarithmic
factor appears to be an artifact of the proof and it is expected that $1/n^2$ is the optimal rate.) 

The method in~\cite{ChernovZhang05} shows in particular that these billiard maps are modelled by a Young tower with return time function $r\in L^p(Y)$
for all $p<3$.  
It follows that the quotient nonuniformly expanding map satisfies statistical limit laws such as those discussed in this paper as well as the almost sure invariance principle (see for example~\cite{MN05}).  To deduce similar results for
the billiard map itself, it is necessary to either 
\begin{itemize}
\item[(i)] Verify that there is sufficient contraction along stable manifolds so that the map $\chi_1$ in Section~\ref{sec-quotient} can be shown to be $d_\theta$-Lipschitz for some $\theta$, hence enabling the application of the results in~\cite{MN05}, or
\item[(ii)] Proceed as in the current paper.
\end{itemize}
We do not know whether the verification in~(i) can be carried out.
Nevertheless the main results in our paper apply, and we obtain the CLT together with its functional and iterated versions.

\begin{rmk}  \label{rmk-flower} The same caveat regarding the nonuniformly hyperbolic billiard map
and the quotient nonuniformly expanding map applies to estimating rates of decay of correlations.  Strictly speaking,~\cite{ChernovZhang05} prove decay of correlations at rate $O((\log n)^3/n^2)$ for the quotient map; the decay for the
billiard map itself then follows from~\cite{GouezelPC, MT14}.
\end{rmk}
\end{examp}

\begin{examp}
Chernov \& Zhang~\cite{ChernovZhang05b} study a class of finite horizon planar periodic dispersing billiards where
the scatterers have smooth strictly convex boundary with nonvanishing curvature,
except that the curvature vanishes at two points. 
Moreover, it is assumed that there is a periodic orbit that runs between the two flat points, and that the boundary near these flat points has the form $\pm (1 + |x|^b )$ for some $b > 2$. 
The correlation function
for the billiard map decays as $O((\log n)^{\beta+1} /n^\beta )$ where $\beta = (b+2)/(b-2) \in (1, \infty)$.
Again, a byproduct of the proof is the existence of a Young tower with $r\in L^p$ for all $p<\beta+1$.  Hence the main results in this paper apply for the full range of parameters $b>2$.
\end{examp}

 \paragraph{Acknowledgement}  The research of IM was supported in part
 by a Santander Staff Mobility Award at the University of Surrey,
by European Advanced Grant {\em StochExtHomog} (ERC AdG 320977),
and by CNPq (Brazil) through PVE grant number 313759/2014-6.
The research of PV was supported in part by a CNPq-Brazil postdoctoral fellowship at the University of Porto.
This research has been supported in part by EU
  Marie-Curie IRSES Brazilian-European partnership in
  Dynamical Systems (FP7-PEOPLE-2012-IRSES 318999
  BREUDS). IM is grateful for the hospitality of UFBA, where most of this research was carried out.


\begin{thebibliography}{10}

\bibitem{AlvesAzevedosub}
J.~F. Alves and D.~Azevedo. Statistical properties of diffeomorphisms with weak
invariant manifolds.  \emph{Discrete Contin. Dyn. Syst.} \textbf{36} (2016) 1--41.

\bibitem{AlvesPinheiro08}
J.~F Alves and V.~Pinheiro. Slow rates of mixing for dynamical systems with
  hyperbolic structures. \emph{J. Stat. Phys.} \textbf{131} (2008) 505--534.

\bibitem{BalintGouezel06}
P.~B{\'a}lint and S.~Gou{\"e}zel. Limit theorems in the stadium billiard.
  \emph{Comm. Math. Phys.} \textbf{263} (2006) 461--512.

\bibitem{BenedicksYoung00}
M.~Benedicks and L.-S. Young. Markov extensions and decay of correlations for
  certain {H}\'enon maps. \emph{Ast\'erisque} (2000) no.~261, 13--56.

\bibitem{Bowen75}
R.~Bowen. \emph{{Equilibrium States and the Ergodic Theory of Anosov
  Diffeomorphisms}}. Lecture Notes in Math. \textbf{470}, Springer, Berlin,
  1975.

\bibitem{Bunimovich73}
L.~A. Bunimovi{\v{c}}. The ergodic properties of billiards that are nearly
  scattering. \emph{Dokl. Akad. Nauk SSSR} \textbf{211} (1973) 1024--1026.

\bibitem{ChazottesGouezel12}
J.-R. Chazottes and S.~Gou{\"e}zel. Optimal concentration inequalities for
  dynamical systems. \emph{Commun. Math. Phys.} \textbf{316} (2012) 843--889.

\bibitem{Chernov99}
N.~Chernov. Decay of correlations and dispersing billiards. \emph{J. Statist.
  Phys.} \textbf{94} (1999) 513--556.

\bibitem{ChernovZhang05}
N.~I. Chernov and H.-K. Zhang. {Billiards with polynomial mixing rates}.
  \emph{Nonlinearity} \textbf{18} (2005) 1527--1553.

\bibitem{ChernovZhang05b}
N.~Chernov and H.-K. Zhang. A family of chaotic billiards with variable mixing
  rates. \emph{Stoch. Dyn.} \textbf{5} (2005) 535--553.

\bibitem{DenkerPhilipp84}
M.~Denker and W.~Philipp. {Approximation by Brownian motion for Gibbs measures
  and flows under a function}. \emph{Ergodic Theory Dynam. Systems} \textbf{4}
  (1984) 541--552.

\bibitem{FMT03}
M.~J. Field, I.~Melbourne and A.~T{\" o}r{\" o}k. {Decay of correlations,
  central limit theorems and approximation by Brownian motion for compact Lie
  group extensions}. \emph{Ergodic Theory Dynam. Systems} \textbf{23} (2003)
  87--110.

\bibitem{Gordin69}
M.~I. Gordin. {The central limit theorem for stationary processes}.
  \emph{Soviet Math. Dokl.} \textbf{10} (1969) 1174--1176.

\bibitem{GouezelPC}
S.~Gou{\"e}zel. Private communication.

\bibitem{Gouezel07}
S.~Gou{\"e}zel. {Statistical properties of a skew product with a curve of
  neutral points}. \emph{Ergodic Theory Dynam. Systems} \textbf{27} (2007)
  123--151.

\bibitem{KM16}
D.~Kelly and I.~Melbourne. {Smooth approximation of stochastic differential
  equations}. \emph{Ann. Probab.} \textbf{44} (2016) 479--520.   

\bibitem{KMsub}
D.~Kelly and I.~Melbourne. Homogenization for deterministic fast-slow systems
  with multidimensional multiplicative noise.  Preprint, 2014.

\bibitem{LiveraniSaussolVaienti99}
C.~Liverani, B.~Saussol and S.~Vaienti. {A probabilistic approach to
  intermittency}. \emph{Ergodic Theory Dynam. Systems} \textbf{19} (1999)
  671--685.

\bibitem{M09b}
I.~Melbourne. {Large and moderate deviations for slowly mixing dynamical
  systems}. \emph{Proc. Amer. Math. Soc.} \textbf{137} (2009) 1735--1741.

\bibitem{MN05}
I.~Melbourne and M.~Nicol. Almost sure invariance principle for nonuniformly
  hyperbolic systems. \emph{Commun. Math. Phys.} \textbf{260} (2005) 131--146.

\bibitem{MN08}
I.~Melbourne and M.~Nicol. Large deviations for nonuniformly hyperbolic
  systems. \emph{Trans. Amer. Math. Soc.} \textbf{360} (2008) 6661--6676.

\bibitem{MN09}
I.~Melbourne and M.~Nicol. A vector-valued almost sure invariance principle for
  hyperbolic dynamical systems. \emph{Ann. Probability} \textbf{37} (2009)
  478--505.

\bibitem{MT14}
I.~Melbourne and D.~Terhesiu. {Decay of correlations for nonuniformly expanding
  systems with general return times}. \emph{Ergodic Theory Dynam. Systems}
  \textbf{34} (2014) 893--918.

\bibitem{MT04}
I.~Melbourne and A.~T{\" o}r{\" o}k. {Statistical limit theorems for suspension
  flows}. \emph{Israel J. Math.} \textbf{144} (2004) 191--209.

\bibitem{MZ15}
I.~Melbourne and R.~Zweim{\"u}ller. {Weak convergence to stable L\'evy
  processes for nonuniformly hyperbolic dynamical systems}. \emph{Ann\ Inst. H.
  Poincar{\'e} (B) Probab. Statist.} \textbf{51} (2015) 545--556.

\bibitem{PomeauManneville80}
Y.~Pomeau and P.~Manneville. Intermittent transition to turbulence in
  dissipative dynamical systems. \emph{Comm. Math. Phys.} \textbf{74} (1980)
  189--197.

\bibitem{Ratner73}
M.~Ratner. {The central limit theorem for geodesic flows on $n$-dimensional
  manifolds of negative curvature}. \emph{Israel J. Math.} \textbf{16} (1973)
  181--197.

\bibitem{Ruelle78}
D.~Ruelle. \emph{{Thermodynamic Formalism}}. Encyclopedia of Math. and its
  Applications \textbf{5}, Addison Wesley, Massachusetts, 1978.

\bibitem{Sinai72}
Y.~G. Sina{\u\i}. {Gibbs measures in ergodic theory}. \emph{Russ. Math. Surv.}
  \textbf{27} (1972) 21--70.

\bibitem{Smale67}
S.~Smale. {Differentiable dynamical systems}. \emph{Bull. Amer. Math. Soc.}
  \textbf{73} (1967) 747--817.

\bibitem{Williams67}
R.~F. Williams. One-dimensional non-wandering sets. \emph{Topology} \textbf{6}
  (1967) 473--487.

\bibitem{Young98}
L.-S. Young. Statistical properties of dynamical systems with some
  hyperbolicity. \emph{Ann. of Math.} \textbf{147} (1998) 585--650.

\bibitem{Young99}
L.-S. Young. Recurrence times and rates of mixing. \emph{Israel J. Math.}
  \textbf{110} (1999) 153--188.

\bibitem{Zweimueller07}
R.~Zweim{\"u}ller. Mixing limit theorems for ergodic transformations. \emph{J.
  Theoret. Probab.} \textbf{20} (2007) 1059--1071.

\end{thebibliography}
\end{document}